\numberwithin{equation}{section}
\renewcommand\le{\leqslant}
\renewcommand\ge{\geqslant}
\newtheorem{theorem}{Theorem}[section]
\newtheorem{lemma}[theorem]{Lemma}
\theoremstyle{definition}
\newtheorem{example}[theorem]{Example}
\newtheorem{construction}[theorem]{Construction}
\newtheorem{problem}[theorem]{Problem}
\newtheorem{remark}[theorem]{Remark}
\newtheorem*{ack}{Acknowledgement}
\theoremstyle{remark}
\newenvironment{romenumerate}[1][0pt]{
\addtolength{\leftmargini}{#1}\begin{enumerate}
 }{\end{enumerate}}
\newcounter{oldenumi}
{\setcounter{oldenumi}{\value{enumi}}
\begin{romenumerate} \setcounter{enumi}{\value{oldenumi}}}
{\end{romenumerate}}
\newcounter{thmenumerate}
\newcounter{romxenumerate}   
\newcounter{xenumerate}   
\newcommand{\refT}[1]{Theorem~\ref{#1}}
\newcommand{\refCon}[1]{Construction~\ref{#1}}
\newcommand{\refL}[1]{Lemma~\ref{#1}}
\newcommand{\refS}[1]{Section~\ref{#1}}
\newcommand{\refE}[1]{Example~\ref{#1}}
\newcommand{\refand}[2]{\ref{#1} and~\ref{#2}}
\newcommand\marginal[1]{\ifdraft
{\marginpar[\raggedleft\tiny #1]{\raggedright\tiny #1}}
{\message{ERROR marginal requires draft option}}}
\newcommand\REM[1]{{\raggedright\texttt{[#1]}\par\marginal{XXX}}}
\newenvironment{comment}{\setbox0=\vbox\bgroup}{\egroup} 
\xdef\klockan{\the\count1.0\the\count255}
\xdef\klockan{\the\count1.\the\count255}\fi
\newcommand{\prodjm}{\prod_{j=1}^m}
\newcommand\set[1]{\ensuremath{\{#1\}}}
\newcommand\bigpar[1]{\bigl(#1\bigr)}
\newcommand\lrpar[1]{\left(#1\right)}
\newcommand\lrabs[1]{\left|#1\right|}
\def\rompar(#1){\textup(#1\textup)}    
\def\xexp(#1){e^{#1}}
\newcommand\setn{\set{1,\dots,n}}
\newcommand\ntoo{\ensuremath{{n\to\infty}}}
\newcommand\ktoo{\ensuremath{{k\to\infty}}}
\newcommand\punkt[1]{\if.#1\else.\spacefactor1000\fi{#1}}
\newcommand\iid{i.i.d\punkt}
\newcommand\ie{i.e\punkt}
\newcommand\eg{e.g\punkt}
\newcommand{\as}{a.s\punkt}
\newcommand{\aex}{a.e\punkt}
\newcommand{\tend}{\longrightarrow}
\newcommand\dto{\overset{\mathrm{d}}{\tend}}
\newcommand\pto{\overset{\mathrm{p}}{\tend}}
\newcommand\asto{\overset{\mathrm{a.s.}}{\tend}}
\newcommand\eqd{\overset{\mathrm{d}}{=}}
\newcounter{CC}
\newcounter{cc}
\newcommand\E{\operatorname{\mathbb E{}}}
\renewcommand\P{\operatorname{\mathbb P{}}}
\newcommand\Bi{\operatorname{Bi}}
\newcommand\Be{\operatorname{Be}}
\newcommand\fall[2]{\lrpar{#1}_{#2}}
\newcommand\gd{\delta}
\newcommand\gf{\varphi}
\newcommand\gG{\Gamma}
\newcommand\gl{\lambda}
\newcommand\gs{\sigma}
\newcommand\gth{\theta}
\newcommand\eps{\varepsilon}
\renewcommand\phi{\xxx}  
\newcommand\cF{\mathcal F}
\newcommand\cL{{\mathcal L}}
\newcommand\cS{{\mathcal S}}
\newcommand\cU{{\mathcal U}}
\renewcommand{\=}{:=}
\newcommand\intoi{\int_0^1}
\newcommand\oi{[0,1]}
\newcommand\setoi{\set{0,1}}
\newcommand\dd{\,\mathrm{d}}
\newcommand\nioo{_{n=1}^\infty}
\newcommand\nn{[n]}
\newcommand\goo{G_\infty}
\newcommand\restr[1]{|_{#1}}
\newcommand\U{\operatorname{U}}
\newcommand\sss{\cS}
\newcommand\sssmu{(\cS,\mu)}
\newcommand{\ps}{probability space}
\newcommand\gh{graph homomorphism}
\newcommand\nge{n_{<}}
\newcommand\dinj{d^-_j}
\newcommand\oivalued{$0/1$-valued}
\newcommand{\Polya}{P\'olya}
\newcommand\ER{Erd\H os--R\'enyi}
\newcommand{\Lovasz}{Lov\'asz}
\begin{document}
\title[Graph limits of growing sequences of random graphs]
{An example of graph limits of growing sequences of random graphs}

\date{18 June, 2012}

\author{Svante Janson}
\address{Department of Mathematics, Uppsala University, PO Box 480,
SE-751~06 Uppsala, Sweden}
\email{svante.janson@math.uu.se}

\author{Simone Severini}
\address{Department of Computer Science and Department of Physics \& Astronomy,
University College London, Gower Street,
WC1E 6BT London, UK}
\email{s.severini@ucl.ac.k}

\subjclass[2000]{05C80}

\begin{comment}  
05 Combinatorics
05C Graph theory [For applications of graphs, see 68R10, 90C35, 94C15]
05C05 Trees
05C07 Vertex degrees
05C35 Extremal problems [See also 90C35]
05C40 Connectivity
05C65 Hypergraphs
05C80 Random graphs
05C90 Applications
05C99 None of the above, but in this section

60 Probability theory and stochastic processes
60C Combinatorial probability
60C05 Combinatorial probability

60F Limit theorems [See also 28Dxx, 60B12]
60F05 Central limit and other weak theorems
60F17 Functional limit theorems; invariance principles
\end{comment}

\begin{abstract}
We consider a class of growing random graphs obtained by creating vertices sequentially one by one: at each step, we choose uniformly the neighbours of the newly created vertex; its degree is a random variable with a fixed but arbitrary distribution, depending on the number of existing vertices. Examples from this class turn out to be the \ER{} random graph, a natural random threshold graph, \emph{etc}. By working with the notion of graph limits, we define a kernel which, under certain conditions, is the limit of the growing random graph. Moreover, for a subclass of models, the growing graph on any given $n$ vertices has the same distribution as the random graph with $n$ vertices that the kernel defines. The motivation stems from a model of graph growth whose attachment mechanism does not require information about properties of the graph at each iteration.
\end{abstract}

\maketitle

\section{Introduction}\label{S:intro0}

Many models of randomly grown graphs have been studied during the recent
years in the attempt of reproducing characteristic properties of natural and
engineered networks. For example, it is well-known that the
power law (Zipf's law)  on
the degree distribution observed for many real-world networks can occur as a
result of
preferential attachment following some local rule (see, \eg{} Mitzenmacher
\cite{Mitzenmacher} and Durrett \cite{Durrett}).

We may initially distinguish between two types of growth depending on
whether the random steps require or do not require local knowledge of the
graph. Of course, preferential attachment requires local knowledge either
available for free or provided by some dynamics that generates it, for
example, a random walk. Such a distinction is meaningful because it helps
to isolate the type information needed for the construction of specific
network ensembles. Once we have assumed no knowledge, we may further
distinguish between rewiring schemes acting on the whole set of vertices and
mechanisms concerned only with the lastly added vertex. This latter scenario
is considered in the present note.

We grow graphs by attaching vertices one by one: at each step, the
neighbours of the new vertex are chosen uniformly; and the number is a random
variable with a fixed but arbitrary distribution depending on the number of
vertices already present. This mechanism reflects
the idea that the graph is constructed by an agent without any kind of
knowledge of the graph, apart from the labels of the vertices. The role of
the agent is to attach vertices according to the chosen distribution.

We study examples of growing sequences of these random graphs within the
framework of graph limits. (See Lov\'asz \cite{Lov08}; for additional
references and basic definitions see Section \ref{Slim} below.) Every convergent
sequence of growing graphs, where ``convergent" means Cauchy in a specific
metric, has a limit which can be represented in
the form of a symmetric measurable function in two
variables also called a \emph{graphon}. The notion of graph limits has been
central to a general theory of parameter testing as developed by Borgs
\emph{et al.} \cite{BorgsPT}. The wider perspective of graph limits is to
propose an approximation theory of graphs. This would help to study large
graphs/networks by looking at the the proportion of copies of any fixed
graph as a subgraph.

Section \ref{S:intro} defines our construction and lists some of its natural
examples.
Section \ref{Srelated} recasts a special case of the construction in terms
of a certain infinite
random graph.
Section \ref{Slim} gives the necessary definitions concerned with graph
limits and kernels.
Section \ref{Smain} contains the main result.
Section \ref{Sfurther} states
further remarks and formulates several open problems.

\section{Preliminaries}\label{S:intro}

Consider a growing sequence of random graphs $(G_n)\nioo$ defined by the
following Markov process:
\begin{construction}\label{D1}
For each $n\ge1$, let
$\nu_n$ be a given probability distribution
on \set{0,\dots,n-1}.
Construct the random graphs $G_1,G_2,\dots$ as follows.
\begin{romenumerate}
\item
$G_1=K_1$, the graph with a single vertex.
\item
For $n\ge2$,
let $D_{n}$ be a random variable with distribution $\nu_{n}$ and
construct $G_{n}$ by adding a new vertex to $G_{n-1}$ and connecting it to
$D_{n}$ of the previously existing vertices; these vertices are chosen uniformly randomly among all $\binom{n-1}{D_{n}}$ possibilities.
($D_n$ and the choice of vertices are independent of $G_{n-1}$.)
\end{romenumerate}
We may label the vertices $1,2,3,\dots$ in the order they are added, so
$G_n$ has vertex set $\nn\=\setn$.
Since edges are added only incident to the new vertex, and edges are never
removed, we can define the infinite random graph $\goo\=\bigcup\nioo G_n$
with vertex set $[\infty]\=\set{1,2,\dots}$;
then $G_n=G_\infty\restr{\nn}$, the restriction of $\goo$ to the vertex set
$\nn$.
\end{construction}

We may regard $G_n$ as a directed graph by directing each edge towards the
endpoint with largest label. Then $D_k$ is the indegree of vertex $k$ in
$G_n$, for any $n\ge k$. The outdegree of $k$ is 0 in $G_k$, and increases
(weakly) as $n$ grows.

\begin{example}\label{EER}
  Fix $p\in\oi$ and let $\nu_{n}=\Bi(n-1,p)$, $n\ge1$. Then \refCon{D1}
  yields the same result as connecting the new vertex $n$ to each previous
  vertex $i$ with probability $p$, with these events independent for
  $i=1,\dots,n-1$. Hence, $G_n=G(n,p)$, the \ER{} random graph where all edges
  appear independently and with probability $p$ each. This random graph has
  been extensively studied, see \eg{} \cite{Bollobas} and \cite{JLR}.
\end{example}

\begin{example}\label{Ethr}
  Fix $p\in\oi$ and let $\nu_{n}$ be concentrated on
  \set{0,n-1} with
  $\nu_{n}\set{n-1}=\P(D_{n}=n-1)=p$
and
  $\nu_{n}\set0=\P(D_{n}=0)=1-p$.
Thus each new vertex is with probability $p$ joined to all previous
vertices, and with probability $p$ to none. This is an example of a random
threshold graph, see \cite[Section 6.3]{SJ238}, where this $G_n$ is denoted
$T_{n,p}$.

Note that each pair of vertices in $G_n$ is joined by an edge with
probability $p$, just as in \refE{EER}. However, in the present example
these events are not always independent for different pairs.
\end{example}

\begin{example}\label{EU}
Let $\nu_{n}$ be the uniform distribution on $\{0,...,n-1\}$. In this case,
the degree of vertex $n$ in $G_{n}$ is then chosen uniformly
at random among all possibilities.
Thus, if we only consider the number of added edges,
this example uses the \textquotedblleft highest
possible amount of randomness\textquotedblright\ for the construction of the
$n$-th iteration graph,
in the sense that the \emph{entropy} of this number is maximal.
Hence, of all graph
ensembles obtained with Construction \ref{D1}, $G_{n}$ is in some sense the
less predictable one. Note also that the neighbours of $n$ are also chosen at random once
the degree has been determined, again maximising the entropy of this step.
Nevertheless,  as is well-known, the total entropy of the growing random
graph is not maximised by this procedure but by \refE{EER} with $p=1/2$.
\end{example}

The purpose of the present note is to find the limit of the sequence $G_n$
in the sense of {graph limits},
see \refS{Slim}.

All graphs are undirected and finite except when we explicitly say
otherwise.
All unspecified limits are as \ntoo.

\section{A related construction}\label{Srelated}

A class of examples, including the three examples above, can be obtained as
follows.

\begin{construction}\label{D2}
  Let $\nu$ be a given probability measure on $\oi$.
Let $\gth_1,\gth_2,\dots,$ be an \iid{} sequence of random variables with
distribution $\nu$. Then, conditionally given this sequence, let $\goo$ be
the infinite random graph on $[\infty]$ where the edge $\left\{i,j \right\}$ 
appears with probability $\gth_{\max\set{i,j}}$,
and all edges appear independently
(conditionally on $(\gth_j)_{j=1}^\infty$). Further, let
$G_n\=G_\infty\restr{\nn}$.
\end{construction}

If $D_n\=|\set{i<n:in\in E(G_n)}|$, \ie{} the indegree of $n$ if we orient
the edges as above, then $D_n$ conditioned on $(\gth_j)_j$ has the distribution
$\Bi(n-1,\gth_n)$. Hence, the distribution of $D_n$
is a mixture of binomial distributions:
\begin{equation}
  \label{num}
  \begin{split}
\P(D_n=k)
&=\E \Bi(n-1,\gth_n)\set k
=\E\binom{n-1}k\gth_n^k(1-\gth_n)^{n-1-k}
\\&
=\binom{n-1}k\intoi\gth^k(1-\gth)^{n-1-k}\dd\nu(\gth),
\qquad 0\le k\le n-1.	
  \end{split}
\end{equation}
It is obvious that
\refCon{D2} is a special case of \refCon{D1}, with $\nu_n\=\cL(D_n)$ given
by \eqref{num}.

\begin{example}  \label{EER2}
Let $\nu=\gd_p$, a point mass at $p\in\oi$.
Then $\gth_n=p$ and $D_n\sim\Bi(n-1,p)$, in other words, $\nu_n=\Bi(n-1,p)$;
hence \refCon{D2} with this $\nu$ yields the  \ER{} random graph $G(n,p)$ in \refE{EER}.
\end{example}

\begin{example}\label{Ethr2}
  Let $\nu=p\gd_1+(1-p)\gd_0$.
(This is the Bernoulli  distribution $\Be(p)$.)
Then $\gth\in\setoi$, which implies $D_n=(n-1)\gth_n$, and
\begin{align*}
  \P(D_n=n-1)&=\P(\gth_n=1)=p,
\\
 \P(D_n=0)&=\P(\gth_n=0)=1-p.
\end{align*}
Hence, \refCon{D2} yields the random threshold graph in \refE{Ethr}.
\end{example}

\begin{example}\label{EU2}
Let $\nu$ be the uniform distribution on $\oi$; thus $\nu=\gl$, the Lebesgue
measure on $\oi$.
Then each $\gth_n\sim \U(0,1)$ and \eqref{num} yields
by the evaluation of a beta integral,
as is well-known,
\begin{equation}\label{eu2}
  \begin{split}
\nu_n\set k&=\P(D_n=k)
=\binom{n-1}k\intoi\gth^k(1-\gth)^{n-1-k}\dd\gth
\\&
=\binom{n-1}k B(k+1,n-k)=\frac1n,
\qquad 0\le k\le n-1.	
  \end{split}
\end{equation}
Consequently, $\nu_n$ is uniform on \set{0,\dots,n-1}, so \refCon{D2} with $\nu=\gl$
yields the random graphs $G_n$ in \refE{EU}.
\end{example}

\begin{example}
The random graph $G_n$ in Examples \refand{EU}{EU2} can also be constructed
as follows, using some basic results on \Polya--Eggenberger urns.

Recall that a \Polya--Eggenberger urn contains red and black balls; we
  repeatedly draw a ball at random from the urn, and then replace the ball
  together with another ball of the same colour.
If we start the urn with
  one ball of each colour, then the sequence of drawn balls has the same
  distribution as the sequence obtained by first taking a random
  $\gth\sim\U(0,1)$ and then, conditioned on $\gth$, taking a
  sequence of \iid{} balls, each being red with probability $\gth$ and black
  otherwise. This is easily verified by a direct calculation, see
\cite{EggPol1923}, \cite[Theorem 3.1]{Mahmoud} and
\eqref{eu2}. Alternatively, it
is easily seen (again by direct calculation) that the sequence of drawn
balls is exchangeable. By de Finetti's theorem
(see \eg{} \cite[Theorem 1.2]{Mahmoud} or, in a more general version,
\cite[Theorem 11.10]{Kallenberg}),
there exists a random variable $\gth$ with values in
$\oi$ such that conditioned on $\gth$, the sequence of drawn balls is \iid{}
with each ball being red with probability $\gth$.
The law of large numbers
yields $R_n/n\asto\gth$, where $R_n$ is the number of red balls drawn in the
first $n$ draws. To see the representation above, with $\gth\sim\U(0,1)$,
it thus suffices to show that $R_n/n\dto\U(0,1)$, see
\cite{EggPol1923}, \cite[Exercise 3.4]{Mahmoud}.

The sequence of the first $n-1$ drawn balls in this urn thus has the same
distribution as the sequences of indicators of edges $\left\{i,n\right\}$, $i=1,\dots,n-1$
in the random graph $G_n$ in \refE{EU2}. (We translate
$\textit{red}=1$ and $\textit{black}=0$.) The graph $G_n$ is therefore described
by a sequence of (finite) draws from \Polya--Eggenberger urns, independent of each
other.
This can be formulated as the following, rather curious, construction:

Start with vertices \set{-1,0,1,2,3,\dots}. Connect $0$ to all other
vertices (except $-1$), but do not connect $-1$ to any vertex.
For each $k\ge1$, consider $i=1,\dots,k-1$ in order; for each $i<k$ pick a
random $j$ in $-1,0,\dots,i-1$ (uniformly and independent of everything
else), and add an
edge $\left\{i,k\right\}$ if and only if there
already is an edge $\left\{j,k\right\}$. The sequence of edge indicators $\left\{i,k\right\}$,
$i=1,\dots,k-1$, then forms a \Polya--Eggenberger sequence as above, for each
$k$. Consequently, if we discard vertices 0 and $-1$ at the end,
the random graph constructed in this way equals $\goo$ in
\refE{EU2}, and we obtain $G_n$ if we do the same construction for
$k=1,\dots,n$.
\end{example}

We note the following  consequence of the law of large numbers.
\begin{lemma}
  \label{L1}
Let $\nu$ be a probability measure on $\oi$, and let $D_n$ have the mixed
binomial distribution in \eqref{num}. Then $D_n/n\dto \nu$ as \ntoo.
\end{lemma}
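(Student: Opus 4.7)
The plan is to condition on $\theta_n$ and use the weak law of large numbers for the binomial distribution, uniformly in the parameter. Since the $\theta_n$ are iid with distribution $\nu$, $\theta_n$ itself has distribution $\nu$ for every $n$, so it suffices to show that $D_n/n - \theta_n \to 0$ in probability. Then since adding a sequence that converges to $0$ in probability preserves the limit in distribution (Slutsky's lemma), we will conclude $D_n/n \dto \nu$.

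To show $D_n/n - \theta_n \pto 0$, I would compute the conditional second moment. Given $\theta_n = \theta$, we have $D_n \sim \Bi(n-1,\theta)$, so
\begin{equation*}
\E\bigsqpar{(D_n/n - \theta)^2 \mid \theta_n = \theta}
= \frac{(n-1)\theta(1-\theta)}{n^2} + \parfrac{\theta}{n}^2
\le \frac{1}{4n} + \frac{1}{n^2},
\end{equation*}
which is bounded uniformly in $\theta \in \oi$ and tends to $0$. Taking expectations gives $\E(D_n/n - \theta_n)^2 \to 0$, hence $D_n/n - \theta_n \to 0$ in $L^2$ and therefore in probability.

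To finish, for any bounded continuous $f : \oi \to \bbR$, uniform continuity of $f$ on the compact interval $\oi$ gives, for any $\eps > 0$, a $\gd > 0$ such that
\begin{equation*}
\bigabs{\E f(D_n/n) - \E f(\theta_n)}
\le \eps + 2\norm{f}_\infty \P\bigpar{|D_n/n - \theta_n| \ge \gd},
\end{equation*}
and the last probability tends to $0$. Since $\E f(\theta_n) = \int f \dd\nu$ for every $n$, letting $n\to\infty$ and then $\eps\to 0$ yields $\E f(D_n/n) \to \int f \dd\nu$, which is the definition of $D_n/n \dto \nu$.

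There is no real obstacle here; the only thing to be careful about is the fact that $\theta_n$ depends on $n$, so one cannot invoke the ordinary weak law (which would need a fixed parameter). The resolution, as above, is that the variance bound for the binomial is uniform in $\theta\in\oi$, which is what lets the argument go through regardless of the particular joint law of $(\theta_n)$ — in fact the conclusion would hold even without independence of the $\theta_n$, as long as each has marginal distribution $\nu$.
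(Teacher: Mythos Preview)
Your proof is correct and follows essentially the same approach as the paper's: the paper also conditions on $\theta_n$, observes that $D_n/(n-1)-\theta_n\pto0$ ``for example, by computing the variance,'' and concludes from $\theta_n\sim\nu$. Your version simply spells out the variance bound and the Slutsky step in more detail.
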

\begin{proof}
  Since $D_n$ conditioned on $\gth_n$ has the distribution $\Bi(n-1,\gth_n)$,
we have the law of large numbers
$D_n/(n-1)-\gth_n\pto 0$ as \ntoo. (For example, by computing the
variance.) The result follows since $\gth_n\sim\nu$.
\end{proof}

\section{Graph limits and kernels}\label{Slim}

We assume that the reader is familiar with the theory of graph limits
developed in
\citet{LSz} and
 \citet{BCLSV1,BCLSV2},
see also
\eg{}
\citet{Austin},
\citet{BRmetrics},
\citet{BCL:unique},
Lov\'asz \cite{Lov08},
\citet{SJ209},
\citet{SJ249}.
We recall only a few definition; these will help to fix our notation.

If $F$ and $G$ are finite graphs, let $t(F,G)$ be the probability that a
random mapping $\gf:V(F)\to V(G)$ is a \emph{\gh}, \ie,
satisfies $\gf(i)\sim\gf(j)$ in $G$ whenever
$i\sim j$ in $F$. We say that a sequence $(G_n)$ of graphs with $|G_n|\to\infty$
\emph{converges} if $\lim_\ntoo t(F,G_n)$ exists for every graph $F$.

\subsection*{Graph limits}
The \emph{graph limits} are objects in a suitable space
defined such that each convergent sequence of graphs has a graph limit as
its limit.
If $\gG$ is a graph limit, then $t(F,\gG)$ is defined for every graph $F$,
and a sequence of graphs $G_n$ with $|G_n|\to\infty$ converges to $\gG$ if
and only if $t(F,G_n)\to t(F,\gG)$ for every $F$.
Hence a graph limit $\gG$ is determined by the numbers $t(F,\gG)\in\oi$ for
graphs $F$.
Formally, the graph limits may be defined as equivalence classes of
convergent sequences of graphs, or as suitable families $(t_F)_{F\in\cU}$ of
numbers, where $\cU$ is the set of graphs. The graph limits can be equivalently defined as
classes of kernels, as we do below. This distinction is immaterial. We tacitly refer to unlabelled graphs. 

It is important that the set of all graphs together with all graph limits is a
compact metric space.

\subsection*{Kernels}
Let $(\sss,\cF,\mu)$ be a probability space.
(We usually denote this space simply by $\sss$ or $(\sss,\mu)$,
with $\cF$ and perhaps $\mu$ being clear from the context.)
A \emph{kernel} or \emph{graphon} on $\sssmu$ is
a measurable symmetric function $W:\sss^2\to \oi$. We will consider graphons with codomain $\oi^2$. For this specific setting see \eg{} \citet{BCLSVgr}.

The basic fact is that every kernel $W$ on a probability space $\sssmu$
defines a graph limit $\gG_W$. Conversely, every graph limit equals $\gG_W$ for some kernel $W$. 
We say that \emph{the
graph limit is represented by the kernel $W$}.
Note that $\gG_W$ implicitly depends on $\sss$ and $\mu$ as well as on $W$.
However, such representations of graph limits are not unique.
We say that two kernels $W_1$ and $W_2$, possibly on different probability spaces,
are \emph{equivalent} 
if they represent the same graph limit, \ie, if $\gG_{W_1}=\gG_{W_2}$.
Since every kernel is equivalent to some kernel on $\oi$, every graph limit
may be represented by a kernel $W$ on $\oi$, equipped with Lebesgue measure $\gl$, but even then $W$ is not unique.
Detailed results are in \citet{BCL:unique}, \citet{BRmetrics} and \citet{SJ249}.

If $G_n$ is a sequence of graphs with $G_n\to\gG_W$, for some kernel $W$, we
also write $G_n\to W$.

\subsection*{Random graphs}

Let $W$ be a kernel, defined on a \ps{} $\sssmu$.
We define a random graph $G(n,W)$ with vertex set $[n]$, for $1\le n\le\infty$,
by first taking an \iid{} sequence $\set{X_i}_{i=1}^n$ of random points in
$\sss$ with the
distribution $\mu$, and then, given this sequence, letting $\left\{i,j\right\}$ be an edge
in $G(n,W)$ with probability $W(X_i,X_j)$.
For a given sequence $(X_i)_i$,
this is done independently for
all pairs $(i,j)\in[n]^2$ with $i<j$.
Note that we may construct $G(n,W)$ for all $n$ by first constructing
$G(\infty,W)$ and then taking the subgraph induced by the first $n$ vertices.
A fundamental result is that
for every kernel $W$, $G(n,W)\to W$ a.s.

Furthermore, two kernels $W_1$ and $W_2$ are equivalent,
\ie{}  $\gG_{W_1}=\gG_{W_2}$,
if and only if $G(n,W_1)\eqd G(n,W_2)$ for every finite $n$, and then also for $n=\infty$.

\section{Main results}\label{Smain}

Given a probability measure $\nu$ on $\oi$, let
$\mu=\mu_\nu\=\nu\times\gl$ be a measure on the
product space $\sss\=\oi^2$. Define the kernel $W:\sss^2\to\oi$ by
\begin{equation}\label{w}
  W\bigpar{(s_1,t_1),(s_2,t_2)}
\=
\begin{cases}
  s_2, & \text{if } t_1<t_2;\\
  s_1, & \text{if } t_1>t_2.
\end{cases}
\end{equation}
We may define $  W\bigpar{(s_1,t_1),(s_2,t_2)}\=0$ if $t_1=t_2$; this is
not important since it really is sufficient to have $W$ defined $\mu$-almost
everywhere.

\begin{theorem}\label{TD2}
Let $\nu$ be a probability measure on $\oi$, and let $1\le n<\infty$.
The random graph\/ $G_n$ defined by \refCon{D2}
and
 the random graph $G(n,W)$ defined by the kernel\/ $W$ in \eqref{w} on the
  probability space $(\sss,\mu_\nu)$ are,
regarded as unlabelled graphs, equal in the sense that they have the same
distribution.
\end{theorem}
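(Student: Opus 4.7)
The plan is to unpack what $G(n,W)$ actually looks like under the specific kernel \eqref{w} and observe that, after relabeling vertices by their $T$-coordinate, it becomes literally the construction in \refCon{D2}.

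Write the iid sample points as $X_i=(S_i,T_i)\in\oi^2$, where $S_i\sim\nu$ and $T_i\sim\gl$ are independent. The crucial feature of the kernel \eqref{w} is that $W(X_i,X_j)$ depends only on the $S$-coordinate of the endpoint with the larger $T$-coordinate: if $T_i<T_j$, then $W(X_i,X_j)=S_j$, and symmetrically. The $T_i$ are a.s.\ distinct (continuous distribution), so almost surely there is a unique permutation $\pi\in\mathfrak S_n$ with $T_{\pi(1)}<T_{\pi(2)}<\dots<T_{\pi(n)}$; moreover $\pi$ is uniformly distributed on $\mathfrak S_n$ and independent of $(S_i)_{i=1}^n$.

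Next I would relabel vertex $\pi(k)$ as $k$ for each $k\in\nn$; this gives a graph isomorphic (as an unlabelled graph) to $G(n,W)$, so it has the same distribution as $G(n,W)$ when we forget labels. Setting $\widetilde S_k\=S_{\pi(k)}$, the edge $\set{k,l}$ with $k<l$ (\ie, $T_{\pi(k)}<T_{\pi(l)}$) is then present, conditionally on $(X_i)$, with probability $\widetilde S_l$, and edges are (conditionally) independent. Because $\pi$ is independent of $(S_i)$ and the $S_i$ are iid with law $\nu$, the sequence $(\widetilde S_k)_{k=1}^n$ is again iid with distribution $\nu$. Comparing with \refCon{D2}, the relabelled graph is constructed in exactly the same way as $G_n$, with $\widetilde S_k$ playing the role of $\gth_k$. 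Hence $G(n,W)$ and $G_n$ coincide in distribution as unlabelled graphs.

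There is no real obstacle here; the only technical care is to confirm that (i) a.s.\ distinctness of the $T_i$ makes the relabeling well-defined, and (ii) independence of $\pi$ and $(S_i)$ together with exchangeability of the iid sample $(S_i)$ gives that the permuted sequence $(\widetilde S_k)$ is still iid~$\nu$. Both are standard. (For $n=\infty$ one may either extend the argument directly, since the $T_i$ remain a.s.\ distinct, or invoke the last sentence of \refS{Slim} to pass from all finite $n$ to $n=\infty$.)
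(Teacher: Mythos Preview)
Your proof for finite $n$ is correct and is essentially identical to the paper's own argument: both order the vertices by the second (uniform) coordinate, observe that the permuted first coordinates remain \iid~$\nu$ and independent of the ordering permutation, and identify the relabelled $G(n,W)$ with \refCon{D2}.

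However, your closing parenthetical about $n=\infty$ is wrong and should be deleted. The paper explicitly remarks (see the second Remark following \refT{TD2}) that the conclusion \emph{fails} for $n=\infty$, giving the threshold-graph case of \refE{Ethr2} as a counterexample. Your proposed ``direct extension'' breaks down because an infinite \iid{} sequence $(T_i)_{i\ge1}$ of $\U(0,1)$ variables \as{} has no minimum, so there is no permutation $\pi$ of $\oo$ with $T_{\pi(1)}<T_{\pi(2)}<\cdots$; the relabelling step is simply undefined. Your alternative appeal to the last sentence of \refS{Slim} is also inapplicable: that statement compares $G(n,W_1)$ and $G(n,W_2)$ for two \emph{kernels}, whereas $G_\infty$ from \refCon{D2} is not of the form $G(\infty,W')$ for any kernel (its vertices are not exchangeable).
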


\begin{remark}
  We have to regard the graphs as unlabelled here, since the vertices in
  $G_n$ are (in general) not equivalent, while they are in $G(n,W)$.
For example, in \refE{Ethr2}, the edges in $G_n$
incident to vertex 1 appear
independently of each other, so the degree of 1 has distribution
$\Bi(n-1,p)$,
while the degree of $n$ is $D_n$, which is 0 or $n-1$.

If we prefer to consider labelled graphs, the correct conclusion is that
$G_n$ with a (uniform) random relabelling of the vertices has the same
distribution as $G(n,W)$, for any finite $n$.
\end{remark}

\begin{remark}
  Similarly, the conclusion of \refT{TD2} fails for $n=\infty$. Consider
  again \refE{Ethr2}. It is easily verified that in $\goo$ there is
  a pair of vertices $i$ and $j$ with the same closed neighbourhoods $\bar
  N(i)$ and $\bar N(j)$ (for example, vertices 1 and 2. In fact, there are
  \as{} infinitely many such pairs), while there is \as{} no such pair in
  $G(\infty,W)$.
\end{remark}

\begin{proof}[Proof of \refT{TD2}]
  Let $X_i=(\xi_i,\eta_i)$, $i=1,2,3,\dots$, be \iid{} points in
  $\sss=\oi^2$ with distribution $\mu_\nu$;
thus each $\xi_i$ has distribution
  $\nu$ and $\eta_i\sim\U(0,1)$, and all $\xi_i,\eta_i$ are independent.

The numbers $\eta_1,\dots,\eta_n$ are \as{} distinct. Order them in
increasing order as $\eta_{i_1}<\eta_{i_2}<\dots<\eta_{i_n}$, and let
$\gth_k\=\xi_{i_k}$. Then $\gth_1,\dots,\gth_n$ are \iid{} with distribution
$\nu$, and $(\gth_i)_{i=1}^n$ is independent of the random permutation
$(i_1,\dots,i_n)$.

Conditioned on $(X_1,\dots,X_n)$,
the edges in $G(n,W)$ appear independently, and the probability of an edge
between $i_j$ and $i_k$, with $j<k$, is
$W(X_{i_j},X_{i_k})=\xi_{i_k}=\gth_k$. Thus, given $(i_1,\dots,i_n)$,
$G(n,W)$ has the same distribution as $G_n$ in \refCon{D2} after the
relabelling $k\mapsto i_k$. Hence, $G(n,W)$ has the same distribution as
$G_n$ with a uniform random relabelling. Consequently, $G(n,W)\eqd G_n$ as
unlabelled graphs.
\end{proof}

\begin{theorem}\label{TD2lim}
  If $G_n$ is defined by \refCon{D2} for some probability measure $\nu$ on
  $\oi$, then $G_n\pto \gG_\nu$ as \ntoo, where
  $\gG_\nu$ is  the graph limit defined by the kernel $W$ in \eqref{w}
  on the \ps{} $(\oi^2,\mu_\nu)$.
\end{theorem}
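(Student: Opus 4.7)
The plan is to obtain \refT{TD2lim} as an immediate consequence of \refT{TD2} together with the fundamental convergence theorem for $W$-random graphs recalled at the end of \refS{Slim}. First I would apply that theorem to the kernel $W$ of \eqref{w} on the probability space $(\oi^2,\mu_\nu)$: for every kernel, $G(n,W)\to\gG_W$ almost surely in the graph-limit sense, and specialised to this $W$ it reads $G(n,W)\to\gG_\nu$ a.s.\ as \ntoo.

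Next I would transfer this convergence to $G_n$ by means of \refT{TD2}, which asserts that $G_n\eqd G(n,W)$ as unlabelled graphs for every finite $n$. Since the homomorphism densities $t(F,\cdot)$ and therefore the topology on the space of graph limits depend only on the isomorphism class of the graph, the distributions of $G_n$ and $G(n,W)$ viewed as random elements of the (compact, metrisable) space of graphs and graph limits coincide. Almost sure convergence of $G(n,W)$ to the deterministic element $\gG_\nu$ implies convergence in distribution; and since the limit is deterministic, convergence in distribution is equivalent to convergence in probability. Hence $G(n,W)\pto\gG_\nu$, and via the identification of distributions provided by \refT{TD2}, also $G_n\pto\gG_\nu$.

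There is no serious obstacle in this argument: it is essentially a two-line reduction, and no further combinatorial estimate or direct computation of $t(F,G_n)$ is required. The only subtlety worth spelling out is the one already noted in the preceding remarks, namely that \refT{TD2} gives an equality of distributions only for unlabelled graphs; but this is exactly the level at which the graph-limit topology operates, so the transfer from $G(n,W)$ to $G_n$ is legitimate. Should a more direct verification ever be desired, one could alternatively verify $t(F,G_n)\to t(F,\gG_\nu)$ in probability for each fixed $F$ by exploiting the independence structure built into \refCon{D2} and the law of large numbers (in the spirit of \refL{L1}), but this is superfluous given \refT{TD2}.
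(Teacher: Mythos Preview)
Your proposal is correct and matches the paper's approach exactly: the paper's proof is the single line ``An immediate consequence of \refT{TD2} and $G(n,W)\pto \gG_W=\gG_\nu$,'' and you have simply unpacked the justification behind that line. The only additional detail you make explicit---that equality in distribution for each fixed $n$ suffices because convergence in probability to a deterministic limit is a marginal property---is a useful clarification but not a different idea.
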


\begin{proof}
  An immediate consequence of \refT{TD2} and $G(n,W)\pto \gG_W=\gG_\nu$.
\end{proof}

We have a similar result for the more general construction
\refCon{D1}, provided the distributions $\nu_n$ converge to $\nu$ after
rescaling by $n$ (or $n-1$).

\begin{theorem}\label{TD1lim}
  Let $G_n$ be defined by \refCon{D1} for some probability measures $\nu_n$,
  and suppose that $D_n/n\dto\nu$ as \ntoo{} for some probability measure
  $\nu$ on $\oi$, where $D_n\sim \nu_n$.
Then $G_n\pto \gG_\nu$ as \ntoo, where
  $\gG_\nu$ is  the graph limit defined by the kernel $W$ in \eqref{w}
  on the \ps{} $(\oi^2,\mu_\nu)$.
\end{theorem}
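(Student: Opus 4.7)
The plan is to reduce \refT{TD1lim} to \refT{TD2lim} by coupling $G_n$ with a copy $\tilde G_n$ of the \refCon{D2} graph driven by $\nu$, in such a way that the two graphs share almost all their edges. Concretely, I will show that the normalized edit distance $N\qww|E(G_N)\triangle E(\tilde G_N)|$ tends to $0$ in probability; since $\tilde G_N \pto \gG_\nu$ by \refT{TD2lim}, this forces $G_N \pto \gG_\nu$ as well.

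First, let $\tilde D_m$ denote the degree variable of \refCon{D2} with measure $\nu$; by \refL{L1}, $\tilde D_m/m \dto \nu$, matching the hypothesis $D_m/m \dto \nu$. Since $\oi$ is compact, weak convergence of probability measures on $\oi$ is equivalent to Wasserstein-$1$ convergence, so the Wasserstein-$1$ distance between $\cL(D_m/m)$ and $\cL(\tilde D_m/m)$ tends to $0$ as \ntoo. I then choose, independently across $m$, an optimal coupling of $D_m$ and $\tilde D_m$ achieving this distance; this gives $\E|D_m - \tilde D_m|/m \to 0$.

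Second, I couple the two graph processes on a common probability space via shared random neighborhoods: for each $m \ge 2$, take an independent uniform random permutation $\pi_m$ of $\set{1,\dots,m-1}$, and let the in-neighborhood of vertex $m$ be $\set{\pi_m(1),\dots,\pi_m(D_m)}$ in $G$ and $\set{\pi_m(1),\dots,\pi_m(\tilde D_m)}$ in $\tilde G$. Each graph has its correct marginal law, and the two in-neighborhoods differ in exactly $|D_m - \tilde D_m|$ vertices. Hence $\E|E(G_N)\triangle E(\tilde G_N)| \le \sum_{m=2}^N \E|D_m - \tilde D_m|$, and a weighted Ces\`aro argument shows the right-hand side is $o(N^2)$, since the weights $m$ concentrate on large $m$ where $\E|D_m - \tilde D_m|/m$ is small. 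Markov's inequality then gives $N\qww|E(G_N)\triangle E(\tilde G_N)| \pto 0$.

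Finally, viewing $G_N$ and $\tilde G_N$ as graphons on $\oi^2$ in the standard pixel fashion, the cut distance is dominated by the $L^1$ distance, which equals $2N\qww|E(G_N)\triangle E(\tilde G_N)| \pto 0$. Combined with $\tilde G_N \pto \gG_\nu$ and the triangle inequality in the cut metric, this yields $G_N \pto \gG_\nu$. The delicate point is the coupling of $D_m$ and $\tilde D_m$: no structural relationship between $\nu_m$ and the mixed binomial law of $\tilde D_m$ is available, so I must rely purely on their marginal Wasserstein closeness, which is guaranteed by their common weak limit $\nu$ on the compact interval $\oi$.
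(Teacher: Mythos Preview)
Your argument is correct and takes a genuinely different route from the paper's. The paper proceeds by a direct moment computation: it expands $n(F,G_n)$ into a sum over permutations of \emph{increasing} homomorphism counts, computes $\E n_<(F_\sigma,G_n)$ explicitly from the independence of $D_1,\dots,D_n$, shows $\E t(F,G_n)\to t_F$ with $t_F$ a function of the moments $M_d=\intoi x^d\dd\nu(x)$ only, invokes \cite[Corollary~3.2]{SJ209} to obtain convergence to \emph{some} graph limit, and then identifies that limit as $\gG_\nu$ by specialising to \refCon{D2} and citing \refT{TD2lim}. Your coupling approach bypasses the homomorphism bookkeeping entirely: you exploit that on the compact interval $\oi$ weak convergence implies Wasserstein-$1$ convergence, couple $D_m$ and $\tilde D_m$ optimally and nest the in-neighbourhoods via a common random permutation, and conclude via the elementary bound $\gd_\square\le\norm{\cdot}_\square\le\norm{\cdot}_1$ on pixel graphons. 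What the paper's method buys is an explicit formula $m!\,t_F=\sum_\gs\prodjm M_{\dinj}$ for the limiting subgraph densities, and it does not rely on the cut-metric machinery. What your method buys is a shorter, more conceptual argument that makes the reduction to \refT{TD2lim} quantitative (edit distance $o(N^2)$) rather than going through an abstract identification of limits; it also avoids the appeal to \cite{SJ209}. Both routes ultimately lean on \refT{TD2lim}, but yours uses it once, directly, whereas the paper uses it only at the end to pin down an already-established limit.
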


\begin{proof}
  If $F$ and $G$ are labelled graphs, let
$n(F,G)$ be the number of \gh{s} $\gf:F\to G$; thus $t(F,G)=n(F,G)/|G|^{|F|}$.
Further, let $\nge(F,G)$ be the number of \gh{s} $\gf:F\to G$ that are
increasing, \ie, $\gf(i)<\gf(j)$ when $i<j$, and let $n_0(F,G)$ be the
number of \gh{s} $F\to G$ that are not injective.

Let $F$ be a fixed graph with vertices labelled $1,\dots,m=|F|$.
If $\gs$ is a permutation of $[m]$, let $F_\gs$ be $F$ relabelled by
$i\mapsto\gs(i)$. For any labelled graph $G$,
\begin{equation}
  \label{nfg}
n(F,G)=
\sum_\gs \nge(F_\gs,G)+n_0(F,G),
\end{equation}
since an injective map $V(F)\to V(G)$ is increasing as a map $F_\gs\to G$
for exactly one permutation $\gs$.

Fix a permutation $\gs$ and consider $\nge(F_\gs,G_n)$, with $G_n$ as in
\refCon{D1}.
We regard $F_\gs$ as a directed graph by directing each edge towards the
endpoint with the largest label.
Let $\dinj\=|\set{i<j:\left\{i,j\right\}\in E(F_\gs)}|$ be the indegree in $F_\gs$ of $j\in[m]$.

Let $\gf:[m]\to\nn$ be an increasing map. Then $\gf$ is a \gh{} $F_\gs\to
G_n$ if and only if, for each $j=1,\dots,m$, $G_n$ contains the $\dinj$
edges $\left\{\gf(i),\gf(j)\right\}$ for $i<j$ with $\left\{i,j \right\}$ $\in E(F_\gs)$.
Conditioned on the indegrees $D_1,\dots,D_n$ in $G_n$, this happens with
probability
\begin{equation}
  \prod_{j=1}^m \frac{\binom{D_{\gf(j)}}{\dinj}}{\binom{\gf(j)-1}{\dinj}}
=
  \prod_{j=1}^m \frac{\fall{D_{\gf(j)}}{\dinj}}{\fall{\gf(j)-1}{\dinj}}.
\end{equation}
Hence, taking the expectation, summing over all $\gf$,
and using the independence of $D_1,\dots,D_n$,
\begin{equation}\label{sofie}
\E \nge(F_\gs,G_n)
=\sum_{1\le\gf(1)<\dots<\gf(m)\le n}
  \prod_{j=1}^m \frac{\E\fall{D_{\gf(j)}}{\dinj}}{\fall{\gf(j)-1}{\dinj}}.
\end{equation}

By assumption, $D_k/k\dto \nu$ as $k\to\infty$. By dominated
convergence, since $0\le D_k/k\le 1$, we have
\begin{equation}
  \frac{\E D_k^d}{k^d}\to M_d\=\intoi x^d\dd\nu(x),
\qquad \ktoo,
\end{equation}
for every $d\ge0$. 
Hence also
\begin{equation}\label{jesper}
  \frac{\E \fall{D_k}d}{\fall{k-1}d}
=  \frac{\E D_k^d+O(k^{d-1})}{k^d+O(k^{d-1})}
\to M_d,
\qquad \ktoo.
\end{equation}
Let $\eps>0$, it follows from \eqref{jesper} that there exists $n_\eps$ such
that if $\gf(1)\ge n_\eps$, then the product in \eqref{sofie} differs by at
most $\eps$ from $\prodjm M_{\dinj}$. For smaller $\gf(1)$ we use the fact
that the product is bounded by 1. The total number of terms in the sum in
\eqref{sofie} is
$\binom nm$, of which $O(n^{m-1})$ have $\gf(1)<n_\eps$, and thus we obtain
\begin{equation}
\lrabs{\E \nge(F_\gs,G_n)-\binom nm \prodjm M_{\dinj}}
\le \eps\binom nm + O(n^{m-1}),
\end{equation}
which implies,
\begin{equation}\label{em}
\E \nge(F_\gs,G_n)
=\binom nm \prodjm M_{\dinj} + o(n^{m})
=\frac{n^m}{m!} \prodjm M_{\dinj} + o(n^{m}),
\end{equation}
since $\eps>0$ is arbitrary.

We have so far considered a fixed $\gs$, but we now sum \eqref{em} over all
$\gs$ and use \eqref{nfg}. Since $n_0(F,G_n)=O(n^{m-1})$,
\begin{equation}\label{magnus}
\E  n(F,G_n)=t_F n^m+o(n^m)
\end{equation}
for some constant $t_F$ depending on $F$ and $\nu$. We have
$m!\,t_F=\sum_\gs \prodjm M_{\dinj}$, where $\dinj$ depends on $F$ and
$\gs$.

Since $t(F,G_n)=n(F,G_n)/n^m$, \eqref{magnus} is the same as
\begin{equation}
 \E t(F,G_n)\to t_F.
\end{equation}
We have proved this for any graph $F$, and it follows by
\cite[Corollary 3.2]{SJ209} that $G_n\pto \gG$ for some graph limit $\gG$.

It remains to identify the limit $\gG$ as $\gG_\nu$. We have proved that
$t_F$ for each graph $F$,
and thus the limit $\gG$, depends on $\nu$ but not otherwise on the
distributions $\nu_n$. For a given distribution $\nu$, we consider
\refCon{D2}, which is a special case of \refCon{D1} with $\nu_n$ the mixture
of binomial distributions given by
\eqref{num}. By \refL{L1}, we have $D_n/n\dto\nu$. We are then in the setting
of the present theorem and the proof above shows $G_n\pto \gG$. On the
other hand, \refT{TD2lim} shows $G_n\pto\gG_\nu$. Hence, $\gG=\gG_\nu$.
\end{proof}

\section{Further comments and open problems}\label{Sfurther}

We have found the limit of the random sequence $G_n$ as a graph limit
defined by a kernel on $(\oi^2,\nu\times\gl)$.
It is easy to find an equivalent kernel on $(\oi^2,\gl\times\gl)$:
Let $\psi:\oi\to\oi$ be the right-continuous inverse of the distribution
function of $\nu$. If $U\sim\U(0,1)$, then $\psi(U)\sim\nu$.
We define $W_\nu$ as the pullback of $W$ via the map
$(s,t)\mapsto(\psi(s),t)$, \ie,
\begin{equation}\label{w2}
  W_\nu\bigpar{(s_1,t_1),(s_2,t_2)}
\=
W\bigpar{(\psi(s_1),t_1),(\psi(s_2),t_2)}
=
\begin{cases}
  \psi(s_2), & \text{if } t_1<t_2;\\
  \psi(s_1), & \text{if } t_1>t_2.
\end{cases}
\end{equation}
Then $W_\nu$ is a kernel on $(\oi^2,\gl^2)$ which is equivalent to $W$ on
$(\oi^2,\nu\times\gl)$; thus we also have $G_n\pto W_\nu$ under the
conditions of Theorem \ref{TD2lim} or \refT{TD2lim}. However, it is at least sometimes possible to find simpler representations.

\begin{example}
  \label{EER3}
In Example \ref{EER} and \ref{EER2}, $\nu=\gd_p$ and $\psi(s)=p$ for all
$s$; thus $W_\nu=p$ is constant. (Similarly, $W=p$ a.e.\ with respect to
$\mu_\nu$.) In fact, as is well known, the graph limit of $G(n,p)$ is
represented by the constant kernel $p$ on any \ps. (Conversely, any
representing kernel equals $p$ \as, see \cite[Corollary 8.12]{SJ249}.)
\end{example}

\begin{example}
  In Example \ref{Ethr} and \ref{Ethr2}, $\nu$ is concentrated on \setoi, so
  $\mu_\nu$ is concentrated on $\setoi\times\oi$.
In particular, the kernel $W$ is \aex{} \oivalued. (This is a general
property of kernels representing limits of threshold graphs; see
\cite{SJ238} and \cite[Section 9]{SJ249}.)

The representation theorem in \cite{SJ238} for general limits of threshold
graphs yields a kernel on \oi. (This kernel is monotone, and this property
makes it uniquely determined  a.e.)
In the present case, the kernel is the indicator function of the
quadrilateral $S_p$ having vertices in $(0,1)$, $(1-p,1-p)$, $(1,0)$ and
$(1,1)$, see \cite[Section 6]{SJ238}. Denote this kernel by $W'$.

It is easy to find a relation between the two representations. Let
$\gf:\oi\to\setoi\times\oi$ be defined by $\gf(x)=(0,1-x/(1-p))$ for
$0\le x\le1-p$ and $\gf(x)=(1,(x-1+p)/p)$ for $1-p<x\le 1$. Then $\gf$ is
measure preserving $(\oi,\gl)\to(\oi^2,\mu_\nu)$ and $W'(x,y)$ is the pullback
$W(\gf(x),\gf(y))$ of $W$.
\end{example}

As said in \refS{Slim}, it is always possible to find an
equivalent kernel on $\oi$. In the two examples above, there are simple and
natural choices of such kernels. However, in Example \refand{EU}{EU2} we do
not know any natural kernel on $\oi$ representing the limit.

\begin{problem}
  Find a natural kernel on $\oi$ representing the limit in \refE{EU}, \ie, a
  natural kernel on $\oi$ that is equivalent to $W$ in \eqref{w} on
  $(\oi^2,\gl^2)$. More generally, find a natural representing kernel on $\oi$ for any $\nu$.
\end{problem}

We close with two different problems inspired by the results above.

\begin{problem}
We have stated Theorems \refand{TD2lim}{TD1lim} with convergence in 
probability. We conjecture that the results are true also almost surely.
\end{problem}

\begin{problem}
  In \refT{TD1lim}, we assume that $D_n/n$ converges in distribution, \ie,
  that the distributions $\nu_n$ converge after rescaling. What happens for
  more general sequences $\nu_n$? Is it possible to characterize the
  sequences $\nu_n$ that give convergence of $G_n$ to some graph limit?
\end{problem}

\begin{ack}
We would like to thank the anonymous referees for their useful comments on the paper. Simone Severini is supported by the Royal Society.
\end{ack}

\newcommand\AAP{\emph{Adv. Appl. Probab.} }
\newcommand\JAP{\emph{J. Appl. Probab.} }
\newcommand\JAMS{\emph{J. \AMS} }
\newcommand\MAMS{\emph{Memoirs \AMS} }
\newcommand\PAMS{\emph{Proc. \AMS} }
\newcommand\TAMS{\emph{Trans. \AMS} }
\newcommand\AnnMS{\emph{Ann. Math. Statist.} }
\newcommand\AnnPr{\emph{Ann. Probab.} }
\newcommand\CPC{\emph{Combin. Probab. Comput.} }
\newcommand\JMAA{\emph{J. Math. Anal. Appl.} }
\newcommand\RSA{\emph{Random Struct. Alg.} }
\newcommand\ZW{\emph{Z. Wahrsch. Verw. Gebiete} }
\newcommand\DMTCS{\jour{Discr. Math. Theor. Comput. Sci.} }

\newcommand\AMS{Amer. Math. Soc.}
\newcommand\Springer{Springer-Verlag}
\newcommand\Wiley{Wiley}

\newcommand\vol{\textbf}
\newcommand\jour{\emph}
\newcommand\book{\emph}
\newcommand\inbook{\emph}
\def\no#1#2,{\unskip#2, no. #1,} 
\newcommand\toappear{\unskip, to appear}

\newcommand\urlsvante{\url{http://www.math.uu.se/~svante/papers/}}
\newcommand\arxiv[1]{\url{arXiv:#1.}}
\newcommand\arXiv{\arxiv}

\def\nobibitem#1\par{}

\end{document}